\newtheorem{theorem}{Theorem}
\newtheorem{lemma}{Lemma}
\newtheorem{open}{Open Problem}
\newtheorem{definition}{Definition}[section]
\theoremstyle{definition}
\newcommand{\beql}[1]{\begin{equation}\label{#1}}
\newcommand{\eeq}{\end{equation}}
\newcommand{\comment}[1]{}
\newcommand{\Abs}[1]{{\left|{#1}\right|}}
\newcommand{\Norm}[1]{{\left\|{#1}\right\|}}
\newcommand{\Set}[1]{{\left\{{#1}\right\}}}
\newcommand{\RR}{{\mathbb R}}
\newcommand{\CC}{{\mathbb C}}
\newcommand{\ZZ}{{\mathbb Z}}
\newcommand{\NN}{{\mathbb N}}
\newcommand{\inner}[2]{{\langle #1, #2 \rangle}}
\newcommand{\dens}{{\rm dens\,}}
\newcommand{\sgn}{{\rm sgn\,}}
\newcommand{\e}[1]{{e\left({#1}\right)}}
\newcounter{rem}
\newcounter{step}
\newcounter{mysec}
\newcounter{mysubsec}[mysec]
\newcounter{othm}
\def\theothm{\Alph{othm}}
\newcounter{fcap}
\newcommand{\mycaption}[2]{
 \refstepcounter{fcap}\label{#2}
 Figure \ref{#2}: {#1}
}
\begin{document}

\title{Multiple lattice tiles and Riesz bases of exponentials}

\author[M. Kolountzakis]{{Mihail N. Kolountzakis}}
\address{M.K.: Department of Mathematics, University of Crete, GR-700 13, Iraklio, Greece}
\email{kolount@math.uoc.gr}

\date{\today}

\begin{abstract}
Suppose $\Omega\subseteq\RR^d$ is a bounded and measurable set and $\Lambda \subseteq \RR^d$ is
a lattice. Suppose also that $\Omega$ tiles multiply, at level $k$, when translated at the locations
$\Lambda$. This means that the $\Lambda$-translates of $\Omega$ cover almost every point of $\RR^d$
exactly $k$ times. We show here that there is a set of exponentials $\exp(2\pi i t\cdot x)$, $t\in T$,
where $T$ is some countable subset of $\RR^d$, which forms a Riesz basis of $L^2(\Omega)$.
This result was recently proved by Grepstad and Lev under the extra assumption that $\Omega$ has
boundary of measure $0$, using methods from the theory of quasicrystals. Our approach is rather more
elementary and is based almost entirely on linear algebra.
The set of frequencies $T$ turns out to be a finite union of shifted copies of the dual
lattice $\Lambda^*$. It can be chosen knowing only $\Lambda$ and $k$ and is the same for all $\Omega$
that tile multiply with $\Lambda$.
\end{abstract}

\maketitle

\noindent{\bf Keywords:} Riesz bases of exponentials; Tiling

\ 


\tableofcontents 

\noindent{\bf Notation: } We write $\e{x} = e^{2\pi i x}$. If $A$ is a set then $\chi_A$ is its
indicator function. If $\Lambda=A\ZZ^d$ is a lattice in $\RR^d$ then
$\Lambda^*=A^{-\top}\ZZ^d$ denotes the dual lattice.

\section{Introduction}\label{sec:intro}

\subsection{Riesz bases}\label{sec;riesz-bases}
In this paper we deal with the question of existence of a Riesz (unconditional) basis of exponentials
$$
e_t(x) := e(t \cdot x)= e^{2\pi i t \cdot x},\ \  t \in L,
$$
for the space $L^2(\Omega)$, where $\Omega\subseteq\RR^d$ is a domain of finite Lebesgue measure and $L \subseteq \RR^d$ is a countable set.
By Riesz basis we mean that every $f\in L^2(\Omega)$ can be written uniquely in the form
\beql{riesz-expansion}
f(x) = \sum_{t \in L} a_t \cdot e(t \cdot x)
\eeq
with the coefficients $a_t$ satisfying
\beql{riesz-bounds}
C_1 \Norm{f}_2^2 \le \sum_{t \in L} \Abs{a_t}^2 \le C_2 \Norm{f}_2^2,
\eeq
for some positive and finite constants $C_1, C_2$.

\subsection{Orthogonal bases}\label{sec:orthogonal-bases}
One very special example of a Riesz basis occurs
when the exponentials $e(t\cdot x), t\in L$,
can be chosen to be orthogonal and complete for $L^2(\Omega)$. One can then choose
$a_t = \Abs{\Omega}^{-1/2}\inner{f}{e_t}$ and $C_1=C_2=\Abs{\Omega}$ for
\eqref{riesz-bounds} to hold as an equality.
For instance, if $\Omega=(0,1)^d$ is the unit cube in $\RR^d$ then one can take $L=\ZZ^d$ and obtain such an orthogonal basis of exponentials.
This case, where an orthogonal basis of exponentials exists, is a very rigid
situation though and many ``reasonable'' domains do not have
such a basis (a ball is one example, or any other smooth convex body or any non-symmetric convex body).

The problem of which domains admit an orthogonal basis of exponentials
has been studied intensively.
The so called Fuglede or Spectral Set Conjecture \cite{fuglede1974operators} (claiming that for $\Omega$ to have such a basis it is necessary and sufficient that it can tile space by translations)
was eventually proved to be false in dimension at least 3
\cite{tao2004fuglede,kolountzakis2006tiles,farkas2006onfuglede,farkas2006tiles}, in both
directions. Yet the conjecture may still be true in several important special cases such as
convex bodies \cite{iosevich2003fuglede}, and it generated many interesting results even after
the disproof of its general validity (a rather dated account may be found in \cite{kolountzakis2004milano}).

It is expected that the existence of a Riesz basis for a domain $\Omega$ is a much
more general, and perhaps even generic, phenomenon, although proofs of existence of a Riesz basis for specific domains are still rather rare, especially in higher dimension
\cite{kozma2012combining,lyubarskii2000complete,marzo2006riesz}.

\subsection{Lattice tiles}\label{sec:lattice-tiles}
One general class of domains for which an orthogonal basis of
exponentials is known to exist is the class of {\em lattice tiles}.
A domain $\Omega\in\RR^d$ is said to {\em tile} space when translated at the locations of the lattice $L$ (a discrete additive subgroup of $\RR^d$ containing $d$ linearly independent vectors) if
\beql{lattice-tiling}
\sum_{t\in L} \chi_\Omega(x-t) = 1,\ \ \mbox{for almost all $x\in\RR^d$}.
\eeq
Intuitively this condition means that one can cover $\RR^d$ with the $L$-translates
of $\Omega$, with no overlaps, except for a set of measure zero (usually the translates
of $\partial\Omega$, for ``nice'' domains $\Omega$).

It is not hard to see that when $\Omega$ has finite and non-zero
measure then the set $L$ has density equal to $1/\Abs{\Omega}$.
If $L$ is a lattice then we call $\Omega$ an {\em almost fundamental domain} of $L$ and $\Abs{\Omega}=(\dens{L})^{-1}$.
A {\em fundamental domain} of $L$ is any set which contains
exactly one element of each coset mod $L$,
for instance a fundamental parallelepiped.
There are of course many others, as indicated in Figure \ref{fig:tile}.

\begin{figure}[h]
\begin{center}
\begin{asy}
size(8cm);

pair a=(0,0), b=(1,0), c=(1,0.7), d=(0.3,1), e=(0,1), c1=(2,0.7), d1=(1.3,1), f=(2,1);

draw((-0.5,0) -- (2.5,0), dashed);
draw((-0.5,1) -- (2.5,1), dashed);
draw((0,-0.5)-- (0,1.2), dashed);
draw((1,-0.5)-- (1,1.2), dashed);
draw((2,-0.5)-- (2,1.2), dashed);

dot(a); dot(b); dot(c); dot(d); dot(e); dot(c1); dot(d1); dot(f);

fill(a -- b -- c -- d -- e -- cycle, mediumgray);
fill(c1 -- f -- d1 -- cycle, mediumgray);
label("0", (0,0), SW); label("1", (1,0), SE); label("1",(0,1), SW);
label("$\Omega$", (0.5,0.5), SW);
label("$\Omega$", (1./3.)*(c1+f+d1), E);

\end{asy}

\mycaption{Shaded $\Omega$ is a fundamental domain of $\RR^2/\ZZ^2$}{fig:tile}
\end{center}
\end{figure}

Every lattice tile by the lattice $L$ has an orthogonal basis of exponentials, namely
those with frequencies $t \in L^*$, where $L^*$ is the dual lattice \cite{fuglede1974operators}.

\subsection{Multiple tiling by a lattice}\label{sec:multiple-tiling}
We say that a domain tiles multiply when its translates cover space the same number
of times, almost everyhwere.
\begin{definition}
Let $\Omega \subseteq \RR^d$ be measurable and $L \subseteq \RR^d$ be a countable set.
We say that $\Omega$ tiles $\RR^d$ when translated by $L$ at level $k \in \NN$ if
\beql{tiling}
\sum_{t\in L} \chi_\Omega(x-t) = k,
\eeq
for almost every $x\in\RR^d$.
If we do not specify $k$ then we mean $k=1$. 
\end{definition}
Multiple tiles are a much wider class of domains that level-one tiles.
For instance, any centrally symmetric convex polygon in the plane
whose vertices have integer coordinates
tiles multiply by the lattice $\ZZ^2$ at some level $k \in \NN$.
In contrast, only parallelograms or symmetric hexagons can tile at level one.

Another difference is the fact that if two {\em disjoint} domains $\Omega_1$ and $\Omega_2$
both tile multiply when translated at the locations $L$ then so does their union.
In the case of multiple lattice tiling this operation gives essentially the totality of
multiple tiles starting from level-one tiles, according to the following easy Lemma.

\begin{lemma}\label{lm:split}
Suppose $\Omega\subseteq\RR^d$ is a measurable set which
tiles $\RR^d$ at level $k$ when
translated by the lattice $\Lambda\subseteq\RR^d$. Then we can write
\beql{split}
\Omega = \Omega_1 \cup \cdots \cup \Omega_k \cup E,
\eeq
where $E$ has measure $0$ and the $\Omega_j$ are measurable, mutually disjoint
and each $\Omega_j$ is an almost fundamental domain of the lattice $\Lambda$.
\end{lemma}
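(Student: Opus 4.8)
The plan is to peel off fundamental domains of $\Lambda$ one at a time. Fix a fixed almost fundamental domain $D$ of $\Lambda$ (for instance a fundamental parallelepiped), so that $\sum_{\lambda\in\Lambda}\chi_D(x-\lambda)=1$ a.e. The key observation is that the multiplicity function $m(x):=\sum_{\lambda\in\Lambda}\chi_\Omega(x-\lambda)$ is $\Lambda$-periodic and, by hypothesis, equals $k$ for almost every $x$. I want to carve out of $\Omega$ a measurable subset $\Omega_1$ that meets each coset of $\Lambda$ exactly once (a.e.), i.e.\ an almost fundamental domain of $\Lambda$ contained in $\Omega$. The natural candidate: for each $x\in D$, the fiber $(x+\Lambda)\cap\Omega$ is a set of exactly $k$ points (for a.e.\ $x$); choose one of them in a measurable way and let $\Omega_1$ be the union of these choices over $x\in D$.

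The main step is therefore a measurable selection. First I would tile $\Omega$ by the $\Lambda$-translates of $D$: writing $\Omega=\bigcup_{\lambda\in\Lambda}\big(\Omega\cap(D+\lambda)\big)$ up to a null set, and pushing the piece $\Omega\cap(D+\lambda)$ back by $\lambda$ into $D$, we get countably many measurable subsets $A_\lambda:=(\Omega-\lambda)\cap D\subseteq D$ with $\sum_{\lambda\in\Lambda}\chi_{A_\lambda}(x)=m(x+\text{appropriate shift})=k$ a.e.\ on $D$. So on $D$ we have a countable family of measurable sets covering a.e.\ point exactly $k$ times. Enumerate $\Lambda=\{\lambda_1,\lambda_2,\ldots\}$ and define, for $x\in D$, the index $j_1(x)$ of the \emph{first} $\lambda_j$ with $x\in A_{\lambda_j}$; this is a measurable function of $x$ since $\{j_1=j\}=A_{\lambda_j}\setminus\bigcup_{i<j}A_{\lambda_i}$. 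Then set $\Omega_1:=\bigcup_{j}\big((A_{\lambda_j}\setminus\bigcup_{i<j}A_{\lambda_i})+\lambda_j\big)\subseteq\Omega$. By construction $\Omega_1$ is measurable, contained in $\Omega$, and for a.e.\ $x\in D$ exactly one point of $x+\Lambda$ lies in $\Omega_1$; hence $\Omega_1$ is an almost fundamental domain of $\Lambda$.

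Now replace $\Omega$ by $\Omega\setminus\Omega_1$: its multiplicity function is $m(x)-1=k-1$ a.e., so $\Omega\setminus\Omega_1$ tiles at level $k-1$. Iterating $k$ times (formally, induct on $k$; the base case $k=1$ is just the definition of almost fundamental domain with $E$ a suitable null set coming from the "a.e."\ in the tiling hypothesis) produces disjoint measurable almost fundamental domains $\Omega_1,\ldots,\Omega_k$ with $\Omega=\Omega_1\cup\cdots\cup\Omega_k\cup E$, $|E|=0$, as claimed.

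I do not expect any serious obstacle here; the only point requiring a little care is bookkeeping the null sets (the set where $m\neq k$, and the boundary overlaps of the translates of $D$) and checking that the measurable selection $j_1(\cdot)$ is genuinely measurable — which it is, being defined by a countable combination of the measurable sets $A_{\lambda_j}$. No abstract measurable selection theorem is needed because the fibers are countable and we have an explicit well-ordering of $\Lambda$ to select from.
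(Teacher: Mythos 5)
Your proof is correct and follows essentially the same route as the paper: fix a fundamental domain $D$, observe that almost every fiber $(x+\Lambda)\cap\Omega$ contains exactly $k$ points, and extract the $\Omega_j$ by an explicit measurable selection over $D$. The paper orders the $k$ points of each fiber lexicographically and defines all $\Omega_j$ simultaneously, while you peel off one almost fundamental domain at a time by a greedy choice over an enumeration of $\Lambda$ and induct on $k$; this is the same idea, with the minor merit that your version makes the measurability of the selection completely explicit.
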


\begin{proof}
Let $D\subseteq\RR^d$ be a measurable fundamental domain of $\Lambda$,
for instance one of its fundamental parallelepipeds.
For almost every $x \in D$ (call the exceptional set $E\subseteq D$)
it follows from our tiling assumption
that $\Omega \cap (x+\Lambda)$ contains exactly $k$ points,
which we denote by
$$
p_1(x) < p_2(x) < \cdots < p_k(x),
$$
ordered according to the lexicographical ordering in $\RR^d$.
We also have that almost every point of $\Omega$ belongs to exactly one such list.

Let then $\Omega_j = \bigcup_{x\in D\setminus E} p_j(x)$, for $j=1,2,\ldots,k$.
In other words, for (almost) each one of the classes
mod $\Lambda$ we distribute its $k$ occurences
in $\Omega$ into the sets $\Omega_j$.
It is easy to see that the $\Omega_j$ are disjoint and measurable and
that they are almost fundamental domains of $\Lambda$.
\end{proof}

\subsection{Multiple lattice tiles have Riesz bases of exponentials}
\label{sec:main-result-description}
It is not true that domains that tile multiply by a lattice have an orthogonal basis
of exponentials. For instance, it is known \cite{iosevich2003fuglede}
that the only convex polygons that have
such a basis are parallelograms and symmetric hexagons, yet every symmetric convex polygon
with integer vertices is a multiple tile, a much wider class.

It is however true that multiple tiles have a Riesz basis of exponentials.
The main result of this paper is the following theorem.
\begin{theorem}\label{th:main}
Suppose $\Omega\subseteq\RR^d$ is bounded, measurable and
tiles $\RR^d$ multiply at level $k$ with the lattice $\Lambda$.
Then there are vectors $a_1,\ldots,a_k \in \RR^d$ such that the exponentials
\beql{the-basis}
\e{(a_j+\lambda^*)\cdot x},\ j=1,2,\ldots,k,\, \lambda^*\in\Lambda^*
\eeq
form a Riesz basis for $L^2(\Omega)$.

The vectors $a_1,\ldots,a_k$ depend on $\Lambda$ and $k$ only, not on $\Omega$.
\end{theorem}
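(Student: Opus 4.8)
The plan is to use Lemma~\ref{lm:split} to convert the multiple tiling into a unitary equivalence of $L^2(\Omega)$ with a vector-valued $L^2$ space over a single fundamental domain, under which the candidate exponentials become a matrix-weighted Fourier system; whether that system is a Riesz basis then reduces to a statement about one $k\times k$ matrix, which I would settle with a Vandermonde determinant. Only the last part involves actually choosing the vectors $a_j$, and the choice will visibly depend on $\Lambda$ and $k$ alone.

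First I would fix a fundamental parallelepiped $D$ of $\Lambda$ and, by Lemma~\ref{lm:split}, write $\Omega=\Omega_1\cup\cdots\cup\Omega_k$ up to a null set with each $\Omega_i$ an almost fundamental domain of $\Lambda$. Since $\Omega_i$ meets a.e.\ coset of $\Lambda$ in exactly one point, for a.e.\ $y\in D$ there are well-defined points $x_i(y)\in\Omega_i$ with $x_i(y)\equiv y\pmod{\Lambda}$; write $x_i(y)=y+\lambda_i(y)$ with $\lambda_i(y)\in\Lambda$. As $y\mapsto x_i(y)$ is piecewise a translation it preserves measure, so
\[
U\colon L^2(\Omega)\to L^2(D;\CC^k),\qquad (Uf)_i(y)=f(x_i(y)),
\]
is a well-defined unitary operator. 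Next I would compute the image of the proposed basis: for $t=a_j+\lambda^*$ with $\lambda^*\in\Lambda^*$,
\[
(Ue_t)_i(y)=\e{(a_j+\lambda^*)\cdot(y+\lambda_i(y))}=\e{\lambda^*\cdot y}\,\e{a_j\cdot x_i(y)},
\]
the second equality being the crucial cancellation $\e{\lambda^*\cdot\lambda_i(y)}=1$. Hence $(Ue_t)(y)=\e{\lambda^*\cdot y}\,\vec{w}_j(y)$ with $\vec{w}_j(y)=\bigl(\e{a_j\cdot x_1(y)},\dots,\e{a_j\cdot x_k(y)}\bigr)^\top$. Let $W(y)$ be the $k\times k$ matrix with columns $\vec{w}_1(y),\dots,\vec{w}_k(y)$. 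Recalling the classical fact (see~\S\ref{sec:lattice-tiles}) that $\{\e{\lambda^*\cdot y}\}_{\lambda^*\in\Lambda^*}$ is an orthogonal basis of $L^2(D)$ — so $\{\e{\lambda^*\cdot y}\,\vec{e}_j\}_{\lambda^*,j}$ is one of $L^2(D;\CC^k)$ — the family $\{Ue_t\}$ is exactly the image of that basis under the fibrewise multiplication operator $M_W\colon h\mapsto W(\cdot)h(\cdot)$. Consequently, if $M_W$ is bounded and boundedly invertible, then $\{e_t:t\in\bigcup_j(a_j+\Lambda^*)\}$ is the image of an orthogonal basis under a bounded invertible operator, hence a Riesz basis of $L^2(\Omega)$.

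It remains to make $M_W$ bounded and boundedly invertible, i.e.\ to make $\|W(y)\|$ and $\|W(y)^{-1}\|$ essentially bounded. Factoring out the $y$-dependence, $W(y)_{ij}=\e{a_j\cdot y}\,\e{a_j\cdot\lambda_i(y)}$, so $W(y)=\widetilde{W}(y)\,\mathrm{diag}\bigl(\e{a_1\cdot y},\dots,\e{a_k\cdot y}\bigr)$ with $\widetilde{W}(y)=[\e{a_j\cdot\lambda_i(y)}]_{i,j}$; since the diagonal factor is unitary, $\|W(y)^{\pm1}\|=\|\widetilde{W}(y)^{\pm1}\|$. The entries of $\widetilde{W}(y)$ have modulus $1$, so $\|\widetilde{W}(y)\|\le k$; and since $\Omega$ is bounded the vectors $\lambda_i(y)$ range over a fixed finite subset of $\Lambda$, so $\widetilde{W}(y)$ takes only finitely many values, and $\|\widetilde{W}(y)^{-1}\|$ is bounded once every one of those matrices is invertible. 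Thus everything reduces to choosing $a_1,\dots,a_k$ so that $[\e{a_j\cdot\lambda_i}]_{i,j=1}^k$ is invertible whenever $\lambda_1,\dots,\lambda_k\in\Lambda$ are \emph{distinct}. Here I would take $a_j=j\,w$ for a single vector $w\in\RR^d$: then $[\e{a_j\cdot\lambda_i}]_{i,j}=[\zeta_i^{\,j}]_{i,j}$ with $\zeta_i=\e{w\cdot\lambda_i}$, a Vandermonde matrix of determinant $\bigl(\prod_i\zeta_i\bigr)\prod_{i<i'}(\zeta_{i'}-\zeta_i)$, which is nonzero exactly when the $\zeta_i$ are pairwise distinct, i.e.\ when $w\cdot\mu\notin\ZZ$ for every $\mu\in\Lambda\setminus\{0\}$. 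The set of $w$ violating this is a countable union of hyperplanes, hence Lebesgue-null, so almost every $w$ works (concretely, $w=A^{-\top}(\sqrt{2},\sqrt{3},\sqrt{5},\dots)$ if $\Lambda=A\ZZ^d$); such a $w$, and therefore $a_1,\dots,a_k$, depends only on $\Lambda$ and $k$.

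The step I expect to be the main obstacle is the first part: setting up the unitary model $U$ cleanly from Lemma~\ref{lm:split} and verifying the identity $(Ue_t)(y)=\e{\lambda^*\cdot y}\,\vec{w}_j(y)$, where the cancellation $\e{\lambda^*\cdot\lambda_i(y)}=1$ is what makes the whole scheme work. Once the problem has been recast as uniform invertibility of the matrix weight $W(y)$, the rest — the principle that bounded invertible operators carry orthogonal bases to Riesz bases, and the Vandermonde computation producing an $\Omega$-independent frequency set — is routine linear algebra, as the abstract promises.
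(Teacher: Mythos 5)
Your proof is correct, and it shares the same skeleton as the paper's: both start from Lemma \ref{lm:split}, reduce everything to the pointwise invertibility of the $k\times k$ matrix $[\e{a_j\cdot\lambda_i(y)}]_{i,j}$ built from the "coset representatives" $\lambda_i(y)\in\Lambda$, and both exploit boundedness of $\Omega$ only to conclude that this matrix takes finitely many values, so that invertibility of each one gives uniform bounds. The packaging differs in two genuine ways. First, where you conjugate by the unitary $U$ onto $L^2(D;\CC^k)$ and invoke the criterion "image of an orthogonal basis under a bounded invertible multiplication operator $M_W$", the paper instead proves a decomposition lemma (Lemma \ref{lm:sum}): every $f\in L^2(\Omega)$ is uniquely $f(x)=\sum_j \e{a_j\cdot x}f_j(x)$ with $\Lambda$-periodic $f_j$ and two-sided norm control, and then expands each $f_j$ in the orthogonal basis $\{\e{\lambda^*\cdot x}\}$ of a fundamental domain. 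These are equivalent views of the same linear system (your $\widetilde W(y)$ is, up to conjugation, the paper's $N(x)$), but your version folds the existence, uniqueness and norm bounds of \eqref{riesz-bounds} into one standard operator-theoretic fact. Second, and more substantively, your choice of the $a_j$ is different: the paper picks $a=(a_1,\ldots,a_k)$ generically so that the trigonometric polynomial $\det N$ in \eqref{det-n} is nonvanishing, and then needs a separate countability argument (a countable union of codimension-$1$ zero sets over all possible tiles) to get a choice independent of $\Omega$; your arithmetic-progression choice $a_j=jw$ turns every relevant matrix into a Vandermonde, which is invertible for \emph{every} distinct $k$-tuple in $\Lambda$ as soon as $w\cdot\mu\notin\ZZ$ for all $\mu\in\Lambda\setminus\{0\}$, so the $\Omega$-independence (and an explicit admissible $w$) comes for free. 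The one thing your route does not change is the caveat the paper also records: the Riesz constants still depend on $\Omega$ through the finite family of matrices $\widetilde W(y)$ that actually occur.
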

Theorem \ref{th:main} was proved by Grepstad and Lev \cite{grepstad2012multi}
with the additional topological assumption that the boundary $\partial\Omega$ has Lebesgue
measure 0.

In \cite{grepstad2012multi} the result is proved following the method of \cite{matei2010simple,matei2008quasicrystals} on quasicrystals.
Our approach is more elementary and almost entirely based on linear algebra.
The authors of \cite{grepstad2012multi} have pointed out to me that there are
similarities of the method in this paper and the methods in
\cite{lyubarskii2000complete,lyubarskii1997sampling,marzo2006riesz}.

As an interesting corollary of Theorem \ref{th:main} let us mention, as is done in
\cite{grepstad2012multi}, that, according to the recent result of \cite{gravin2011translational}, if $\Omega$ is a centrally symmetric polytope in $\RR^d$,
whose codimension 1 faces are also centrally symmetric and whose vertices all have rational
coordinates, then $L^2(\Omega)$ has a Riesz basis of exponentials.

\begin{open}
Is Theorem \ref{th:main} still true if $\Omega$ is of finite measure but unbounded?
\end{open}


\section{Proof of the main result}\label{sec:proof}
The essence of the proof is contained in the following lemma.
\begin{lemma}\label{lm:sum}
Suppose $\Omega\subseteq\RR^d$ is bounded, measurable and
tiles $\RR^d$ multiply at level $k$ with the lattice $\Lambda$.
Then there exist vectors $a_1, a_2, \ldots, a_k \in \RR^d$
such that the following is true.

For any $f \in L^2(\Omega)$ there are
unique measurable functions $f_j:\RR^d\to\CC$ such that
\begin{enumerate}
\item The $f_j$ are $\Lambda$-periodic,
\item The $f_j$ are in $L^2$ of any almost fundamental domain of $\Lambda$, and
\item We have the decomposition
\beql{sum}
f(x) = \sum_{j=1}^k \e{a_j\cdot x} f_j(x),\ \ \mbox{for a.e.\ $x\in\Omega$}.
\eeq
\end{enumerate}
Finally we have
\beql{norm-control}
C_1 \Norm{f}_{L^2(\Omega)}^2 \le \sum_{j=1}^k \Norm{f_j}_{L^2(\Omega)}^2
  \le C_2 \Norm{f}_{L^2(\Omega)}^2\ ,
\eeq
where $0<C_1, C_2<\infty$ depend only on $\Omega$ and not on $f$.
\end{lemma}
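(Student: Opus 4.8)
The plan is to reduce the statement to a finite-dimensional linear-algebra problem on one fixed fundamental domain of $\Lambda$, and then to make a single, $\Omega$-independent choice of the frequencies $a_1,\dots,a_k$. Fix a bounded measurable fundamental domain $D$ of $\Lambda$ (a fundamental parallelepiped, say) and, as in the proof of Lemma~\ref{lm:split}, for almost every $x\in D$ list the $k$ points of $\Omega\cap(x+\Lambda)$ as $p_1(x),\dots,p_k(x)$, with the associated almost fundamental domains $\Omega_1,\dots,\Omega_k$; write $p_i(x)=x+\lambda_i(x)$ with $\lambda_i(x)\in\Lambda$. Since $\Omega$ and $D$ are bounded, all the translations that occur lie in the \emph{finite} set $\Lambda_0=(\Omega-D)\cap\Lambda$. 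If each $f_j$ is $\Lambda$-periodic then $f_j(p_i(x))=f_j(x)$, so the desired identity \eqref{sum}, restricted to the coset $x+\Lambda$, becomes the $k\times k$ linear system
\[
f(p_i(x))\;=\;\sum_{j=1}^k \e{a_j\cdot p_i(x)}\,f_j(x),\qquad i=1,\dots,k,
\]
that is, $F(x)=M(x)\,g(x)$ with $F(x)=(f(p_i(x)))_{i=1}^k$, $g(x)=(f_j(x))_{j=1}^k$, and $M(x)_{ij}=\e{a_j\cdot p_i(x)}$. Hence the functions $f_j$ exist and are unique as soon as $M(x)$ is invertible for a.e.\ $x\in D$: one sets $g=M^{-1}F$ on $D$ and extends $\Lambda$-periodically, measurability being inherited from that of $x\mapsto p_i(x)$ established in Lemma~\ref{lm:split}.

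The crux is to choose $a_1,\dots,a_k$, knowing only $\Lambda$ and $k$, so that $M(x)$ is always invertible, uniformly. Since $M(x)_{ij}=\e{a_j\cdot x}\,\e{a_j\cdot\lambda_i(x)}$ we may write $M(x)=M'(x)\,\mathrm{diag}(\e{a_j\cdot x})$ with $M'(x)_{ij}=\e{a_j\cdot\lambda_i(x)}$; the diagonal factor is unitary, so $M(x)$ is invertible iff $M'(x)$ is, with $\Norm{M(x)^{-1}}=\Norm{M'(x)^{-1}}$, and $M'(x)$ is built from $k$ \emph{distinct} lattice points. Now pick a vector $a\in\RR^d$ lying outside the countable union of hyperplanes $\Set{a : a\cdot\nu\in\ZZ}$, $\nu\in\Lambda\setminus\Set{0}$, so that the unimodular numbers $z_\mu:=\e{a\cdot\mu}$, $\mu\in\Lambda$, are pairwise distinct, and set $a_j:=j\,a$. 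Then for any distinct $\mu_1,\dots,\mu_k\in\Lambda$,
\[
\bigl(\e{a_j\cdot\mu_i}\bigr)_{i,j=1}^k=\bigl(z_{\mu_i}^{\,j}\bigr)_{i,j=1}^k=\mathrm{diag}(z_{\mu_1},\dots,z_{\mu_k})\cdot\bigl(z_{\mu_i}^{\,j-1}\bigr)_{i,j=1}^k,
\]
a product of an invertible diagonal matrix and a Vandermonde matrix with distinct nodes, hence invertible. This $a_1,\dots,a_k$ depends only on $\Lambda$ and $k$. Moreover, for a given $\Omega$ the matrix $M'(x)$ ranges over the \emph{finitely} many $k$-row submatrices of $\bigl(\e{a_j\cdot\mu}\bigr)_{\mu\in\Lambda_0,\,1\le j\le k}$, all invertible, so $\Norm{M(x)^{-1}}$ is bounded for a.e.\ $x$ by a constant depending only on $\Omega$, while trivially $\Norm{M(x)}\le k$.

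It remains to obtain \eqref{norm-control} by integrating over $D$ the resulting pointwise two-sided estimate $c_\Omega\Norm{F(x)}^2\le\Norm{g(x)}^2\le C_\Omega\Norm{F(x)}^2$ (with $\Norm{\cdot}$ the Euclidean norm on $\CC^k$ and $0<c_\Omega\le C_\Omega<\infty$). On the one hand $\int_D\Norm{g(x)}^2\,dx=\sum_j\Norm{f_j}_{L^2(D)}^2=\tfrac1k\sum_j\Norm{f_j}_{L^2(\Omega)}^2$, since each $f_j$ is $\Lambda$-periodic and $\Omega$ covers $D$ exactly $k$ times; this also shows each $f_j$ is in $L^2$ of $D$, hence of any almost fundamental domain of $\Lambda$. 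On the other hand the map $\Omega_i\to D$ sending $y$ to its representative mod $\Lambda$ is piecewise a translation, hence measure preserving, so $\int_D\Norm{F(x)}^2\,dx=\sum_i\int_{\Omega_i}\Abs{f(y)}^2\,dy=\Norm{f}_{L^2(\Omega)}^2$. Combining these identities with the pointwise estimate yields \eqref{norm-control} with $C_1=k\,c_\Omega$ and $C_2=k\,C_\Omega$, constants depending only on $\Omega$ (through $\Lambda_0$).

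The main obstacle is the middle step: the frequencies $a_j$ must be committed to \emph{before} $\Omega$ is known, yet they must make \emph{every} exponential matrix $\bigl(\e{a_j\cdot\mu_i}\bigr)$ coming from $k$ distinct lattice points invertible; the Vandermonde choice $a_j=ja$ (for generic $a$) is exactly what achieves this. Once the $a_j$ are fixed, boundedness of $\Omega$ makes the relevant family of matrices finite, so their inverses are uniformly bounded, and the rest — measurability, uniqueness, and passing from the pointwise bounds to \eqref{norm-control} — is routine bookkeeping.
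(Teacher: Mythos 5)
Your proposal is correct, and its overall architecture coincides with the paper's: split $\Omega$ into $k$ almost fundamental domains, turn \eqref{sum} into a $k\times k$ linear system on each coset of $\Lambda$, use the boundedness of $\Omega$ to see that only finitely many coefficient matrices occur, and integrate the resulting pointwise two-sided bound to get \eqref{norm-control}. Where you genuinely diverge is in how the $a_j$ are chosen so that every matrix $\bigl(\e{a_j\cdot\mu_i}\bigr)$ built from $k$ distinct lattice points $\mu_1,\dots,\mu_k$ is invertible. The paper argues by genericity: for each fixed $\Omega$ the determinant is a trigonometric polynomial in $(a_1,\dots,a_k)\in\RR^{dk}$ whose frequencies $(\lambda_{\pi_1}(x),\dots,\lambda_{\pi_k}(x))$ are distinct, hence it is not identically zero and vanishes only on a codimension-one set; the $\Omega$-independence of the $a_j$ then requires an extra countability argument (given at the end of the proof of Theorem \ref{th:main}) over all non-trivial polynomials of this form. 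You instead make the explicit choice $a_j=ja$ with $a$ chosen so that $a\cdot\nu\notin\ZZ$ for all $\nu\in\Lambda\setminus\Set{0}$, which turns every relevant matrix into a (row-rescaled) Vandermonde matrix with distinct unimodular nodes $\e{a\cdot\mu_i}$. This buys you the $\Omega$-independence (indeed, a single $a$ works for all levels $k$ simultaneously) in one stroke and with no appeal to zero sets of trigonometric polynomials; what the paper's route buys in exchange is the stronger statement that \emph{almost every} $k$-tuple $(a_1,\dots,a_k)$ works, not just the Vandermonde-type ones. Both arguments correctly isolate the only role of boundedness (finiteness of $(\Omega-D)\cap\Lambda$, respectively $(\Omega-\Omega)\cap\Lambda$), and your bookkeeping for the norms — $\int_D\Norm{F}^2=\Norm{f}_{L^2(\Omega)}^2$ via the measure-preserving maps $D\to\Omega_i$, and $\int_D\Norm{g}^2=\frac1k\sum_j\Norm{f_j}_{L^2(\Omega)}^2$ via periodicity — matches the paper's \eqref{norm-bounds} up to the harmless sign convention on $\lambda_i(x)$ and the choice of $D$ versus $\Omega_1$ as the base domain.
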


\begin{proof}
Using Lemma \ref{lm:split} we can write $\Omega$ as the disjoint union
$$
\Omega = \Omega_1 \cup \cdots \cup \Omega_k,
$$
where each $\Omega_k$ is a measurable almost fundamental domain of $\Lambda$.
We can now define for $j=1,2,\ldots,k$ and for almost every $x \in \RR^d$
\beql{omega-x}
\omega_j(x)\ \mbox{as the unique point in $\Omega_j$ s.t.\ $x-\omega_j(x) \in \Lambda$, and}
\eeq
\beql{lambda-x}
\lambda_j(x) = x - \omega_j(x).
\eeq
(The maps $\omega_j$ are clearly measurable and measure-preserving when restricted to a fundamental
domain of $\Lambda$.)
Since the sought-after $f_j$ are to be $\Lambda$-periodic it is enough to define them
on $\Omega_1$ and extend them to $\RR^d$ by their $\Lambda$-periodicity.
We may therefore rewrite our target decomposition \eqref{sum} equivalently as follows.
\beql{sum1}
\mbox{For each $x\in \Omega_1$ and $r=1,2,\ldots,k$:
	$f(\omega_r(x)) = \sum_{j=1}^k \e{a_j\cdot (x-\lambda_r(x))} f_j(x)$.}
\eeq
We view \eqref{sum1} as a $k\times k$ linear system
\beql{lin-sys}
M \widetilde{F} = F
\eeq
whose right-hand side is the column vector
$$
F = (f(\omega_1(x)), f(\omega_2(x)), \ldots, f(\omega_k(x)))^\top
$$
and the unknowns form the column vector
$$
\widetilde{F} = (f_1(x), f_2(x), \ldots, f_k(x))^\top.
$$
We have a different linear system for each $x \in \Omega_1$ and its matrix is
$M = M(x) \in \CC^{k\times k}$ with
\beql{matrix-m}
M_{r,j} = M_{r,j}(x) = \e{a_j\cdot(x-\lambda_r(x))},\ \ r,j=1,2,\ldots,k.
\eeq
Factoring we can write this matrix as
\beql{matrix-n}
M(x) = N(x) \, {\rm diag\,}(\e{a_1\cdot x}, \e{a_2\cdot x}, \ldots, \e{a_k\cdot x}),
\eeq
with the matrix $N = N(x)$ given by
$$
N_{r,j} = N_{r,j}(x) = \e{-a_j\cdot\lambda_r(x)},\ \ r,j=1,2,\ldots,k.
$$
The key observation here is that when varying $x \in \Omega_1$
the number of different $N(x)$ matrices that arise
(the $a_j$ are fixed) is finite and depends on $\Omega$ only.
The reason for this is that the vectors $\lambda_r(x)$
are among the $\Lambda$ vectors in the bounded set $\Omega-\Omega$, hence they take values in  a finite set. (This is the only place where the boundedness of $\Omega$ is used.)

Let us now see that the vectors $a_1,\ldots,a_k$ can be chosen so that all the (finitely many) possible matrices $N$ are invertible. We have
\beql{det-n}
\det{N(x)} = \sum_{\pi \in S_k} \sgn(\pi)\,\e{-\sum_{j=1}^k a_j\cdot\lambda_{\pi_j}(x)},
\eeq
where $S_k$ denotes the permutation group on $\Set{1,2,\ldots,k}$.
By the definition of the vectors $\lambda_r(x)$
and the disjointness of the sets $\Omega_r$ it follows that for each $x$
no two $\lambda_r(x)$ can be the same.
View now the expression \eqref{det-n} as a function
of the vector $a=(a_1,\ldots,a_k) \in \RR^{dk}$.
Clearly it is a trigonometric polynomial and
it is not identically zero as all the frequencies
(for $\pi$ in the symmetric group $S_k$)
\beql{freqs}
\lambda_\pi(x) = (\lambda_{\pi_1}(x), \ldots, \lambda_{\pi_k}(x)) \in \RR^{dk},
\eeq
are distinct precisely because all the $\lambda_r(x)$ are distinct.
Since the zero-set of any trigonometric polynomial (that is not identically zero)
is a set of codimension at least 1 it follows that the vectors
$a_1,\ldots,a_k$ can be chosen so that all the $N(x)$ matrices that arise are invertible.

Let now $x\in\Omega_1$ and consider the solution of the linear system
\eqref{lin-sys} at $x$
that now takes the form
\beql{lin-sys-n}
\widetilde{F}(x) =
 {\rm diag\,}(\e{-a_1\cdot x}, \e{-a_2\cdot x}, \ldots, \e{-a_k\cdot x}) \, N(x)^{-1} F(x).
\eeq
Since $N(x)$ runs through a finite number of invertible matrices it follows that there
are finite constants $A_1, A_2>0$, independent of $f$,
such that for any $x\in\Omega_1$ we have
\beql{norms}
A_1 \Norm{F(x)}_{\ell^2}^2 \le \Norm{\widetilde{F}(x)}_{\ell^2}^2
   \le  A_2 \Norm{F(x)}_{\ell^2}^2.
\eeq
Integrating \eqref{norms} over $\Omega_1$ we obtain
\beql{norm-bounds}
A_1 \Norm{f}_{L^2(\Omega)}^2 \le \sum_{j=1}^k \Norm{f_j}_{L^2(\Omega_1)}^2
 \le A_2 \Norm{f}_{L^2(\Omega)}^2.
\eeq
This implies \eqref{norm-control} with $C_j=k\cdot A_j$, $j=1,2$.
To show the uniqueness of the decomposition \eqref{sum} observe that any such decomposition
must satisfy the linear system \eqref{lin-sys-n}, whose non-singularity has been
ensured by our choice of the $a_j$.
\end{proof}

We can now complete the proof of our main result.

\begin{proof}[Proof of Theorem \ref{th:main}]
Let $f\in L^2(\Omega)$. By Lemma \ref{lm:sum} we can write $f$ as in \eqref{sum}.
Since the $f_j$ are $\Lambda$-periodic and are in $L^2$ of any almost fundamental
domain $D$ of $\Lambda$ it follows that we can expand each $f_j$ in the frequencies
of $\Lambda^*$ (the dual lattice of $\Lambda$)
\beql{fj-expand}
f_j(x) = \sum_{\lambda^*\in\Lambda^*} f_{j,\lambda^*} \e{\lambda^*\cdot x},
\ j=1,2,\ldots,k,
\eeq
with
\beql{fj-parseval}
\Norm{f_j}_{L^2(D)}^2 = \sum_{\lambda^*\in\Lambda^*} \Abs{f_{j,\lambda^*}}^2,
\eeq
since the exponentials $e(\lambda^*\cdot x)$, $\lambda^*\in\Lambda^*$, form
an orthogonal basis of $L^2(D)$.

The completeness of \eqref{the-basis} follows from \eqref{sum}:
\beql{f-decomp}
f(x) = \sum_{j=1}^k \sum_{\lambda^*\in\Lambda^*} f_{j,\lambda^*} \e{(a_j+\lambda^*)\cdot x}.
\eeq
The fact that \eqref{the-basis} is a Riesz sequence follows from \eqref{norm-control}:
$$
\frac{1}{C_2 k} \sum_{j,\lambda^*} \Abs{f_{j,\lambda^*}}^2 \le 
 \Norm{ \sum_{j,\lambda^*} f_{j,\lambda^*} \e{(a_j+\lambda^*)\cdot x} }_{L^2(\Omega)}^2 \le 
 \frac{1}{C_1 k} \sum_{j,\lambda^*} \Abs{f_{j,\lambda^*}}^2.
$$

As is clear from the proof above, the $k$-tuples of vectors $a_1,\ldots,a_k$ that appear in Theorem \ref{th:main} are a generic choice: almost all $k$-tuples will do. The exceptional
set in $\RR^{dk}$ is a set of lower dimension.

With a little more care one can see that one can choose the vectors $a_1,\ldots,a_k$ to depend on $\Lambda$ and $k$ only and not on $\Omega$. In the proof of Lemma \ref{lm:sum} the $a_j$ were chosen
to ensure that the trigonometric polynomials \eqref{det-n} are all non-zero. Fix $\Lambda$ and $k$
and form the set of all polynomials of the form \eqref{det-n} which are not identically zero.
This set of polynomials is countable and each such polynomial vanishes on a set of
codimension at least 1 in $\RR^{dk}$.
It follows that the union of their zero sets cannot possibly exhaust $\RR^{dk}$
and we only have to choose the $a_j$ to avoid that union.

Thus there is a choice of $a_j$ that works for all $\Omega$ of the same lattice.
This proof does not give uniform values for the constants $C_1$ and $C_2$ in \eqref{norm-control} though.
\end{proof}

\bibliographystyle{abbrv}
\bibliography{spectral-sets}

\end{document}